\newcommand{\R}{\mathbb{R}}
\DeclareMathOperator{\spn}{span}
\begin{document}
\title{Virtual nonlinear
nonholonomic constraints from a symplectic point of view.\thanks{The authors acknowledge financial support from Grant PID2019-106715GB-C21 funded by MCIN/AEI/ 10.13039/501100011033. A.B. was partially supported by NSF grants DMS-1613819 and DMS-2103026, and AFOSR grant FA
9550-22-1-0215.}}

\author{Efstratios Stratoglou\inst{1} \and
Alexandre Anahory Simoes\inst{2} \and
Anthony Bloch\inst{3}\and
Leonardo Colombo\inst{4}}
\authorrunning{E. Stratoglou et al.}
%
\institute{Universidad Polit\'ecnica de Madrid (UPM), José Gutiérrez Abascal, 2, 28006 Madrid, Spain. \email{ef.stratoglou@alumnos.upm.es } \and
School of Science and Technology, IE University, Spain.
\email{alexandre.anahory@ie.edu}\\ \and
Department of Mathematics, University of Michigan, Ann Arbor, MI 48109, USA. 
\email{abloch@umich.edu}\and Centre for Automation and Robotics (CSIC-UPM), Ctra. M300 Campo Real, Km 0,200, Arganda
del Rey - 28500 Madrid, Spain. \email{leonardo.colombo@csic.es}}
\maketitle              
\begin{abstract}
   In this paper, we provide a geometric characterization of virtual nonlinear nonholonomic constraints from a symplectic perspective.  Under a transversality assumption, there is a unique control law making the trajectories of the associated closed-loop system satisfy the virtual nonlinear nonholonomic constraints. We characterize them in terms of the symplectic structure on $TQ$ induced by a Lagrangian function and the almost-tangent structure. In particular, we show that the closed-loop vector field satisfies a geometric  equation of Chetaev type. Moreover, the closed-loop dynamics is obtained as the projection of the uncontrolled dynamics to the tangent bundle of the constraint submanifold defined by the virtual constraints.
 
\keywords{Nonholonomic Systems  \and Virtual Constraints \and Geometric Control \and Nonlinear Control.}
\end{abstract}
\section{Introduction}



Virtual constraints are relationships imposed on control systems through feedback control instead of physical links between joints. Virtual nonholonomic constraints \cite{griffin2015nonholonomic} represent a specific class of virtual constraints that depend on the system’s velocities in addition to its configurations.

In the paper \cite{virtual}, we developed a geometric description of linear virtual nonholonomic constraints, i.e., constraints that are linear in the velocities, while in \cite{affine} we addressed the problem of affine virtual nonholomonic constraints. In \cite{nonlinear}, we extended the latest outcomes to the setting of virtual nonlinear nonholonomic constraints. By imposing a transversality condition, we identified a unique control law ensuring that the closed-loop system’s trajectories comply with the prescribed constraints.

In this article, we explore a geometric perspective on virtual nonlinear nonholonomic constraints within a symplectic framework, laying the geometric foundations for studying their properties. The closed-loop system is expressed in terms of the symplectic structure on $TQ$, induced by a Lagrangian function, and the almost-tangent structure. In particular, we demonstrate that the closed-loop vector field satisfies a Chetaev-type geometric equation. Additionally, we show that the resulting closed-loop dynamics can be interpreted as the projection of the uncontrolled system onto the tangent bundle of the submanifold defined by the virtual constraints. Lastly, we give an explicit application of the theory to the example of a double pendulum and we test our results with numerical simulations.


\section{Preliminaries on differential geometry}

Suppose $Q$ is a differentiable manifold of dimension $n$. Throughout the text, $q^{i}$ will denote a particular choice of local coordinates on this manifold and $TQ$ denotes its tangent bundle, with $T_{q}Q$ denoting the tangent space at a specific point $q\in Q$ generated by the coordinate vectors $\frac{\partial}{\partial q^{i}}$. $v_{q}$ denotes a vector at $T_{q}Q$ and, $(q^{i},\dot{q}^{i})$ denote natural coordinates on $TQ$. The map $\tau_{Q}:TQ \rightarrow Q$, is the canonical projection sending each vector $v_{q}$ to the corresponding base point $q$. In coordinates $\tau_{Q}(q^{i},\dot{q}^{i})=q^{i}$. The tangent map of the canonical projection is given by $T\tau_Q:TTQ\to TQ.$ The cotangent bundle of $Q$ is denoted by $T^*Q$ and for $q\in Q$ the cotangent space $T^*_qQ$ is generated by cotangent vectors $dq^i$ which satisfies $\langle dq^i,\frac{\partial}{\partial q^j}\rangle=\delta_{ij}$, where $\delta_{ij}$ is the Kronecker delta.

A vector field $X$ on $Q$ is a map assigning to each point $q\in Q$ a vector tangent to $q$, that is, $X(q)\in T_{q}Q$. In the context of mechanical systems, we find a special type of vector fields that are always defined on the tangent bundle $TQ$, considered as a manifold itself. A second-order vector field (SODE) $\Gamma$ on the tangent bundle $TQ$ is a vector field on the tangent bundle satisfying the property that $T\tau_{Q}\left(\Gamma (v_{q})\right) = v_{q}$. The expression of any SODE in local coordinates is 
$\Gamma(q^{i},\dot{q}^{i})= \dot{q}^{i}\frac{\partial}{\partial q^{i}} + f^{i}(q^{i},\dot{q}^{i}) \frac{\partial}{\partial \dot{q}^{i}}$, 
where $f^{i}:TQ \rightarrow \mathbb{R}$ are $n$ smooth functions. We denote the set of all vector fields on $Q$ by $\mathfrak{X}(Q)$.

A one-form $\alpha$ on $Q$ is a map assigning to each point $q$ a cotangent vector to $q$, that is, $\alpha(q)\in T^{*}Q$. Cotangent vectors acts linearly on vector fields according to $\alpha(X) = \alpha_{i}X^{i}\in \mathbb{R}$ if $\alpha = \alpha_{i}dq^{i}$ and $X = X^{i} \frac{\partial}{\partial q^{i}}$. {We denote the set of all one-forms on $Q$ by $\Omega^{1}(Q)$. In the following, we will denote two-forms or $(0,2)$-tensor fields as those skew-symmetrix bilinear maps that act on a pair of vector fields and produce a scalar number; and also we define  $(1,1)$-tensor fields to be  linear maps that act on a vector field and produce a new vector field.}

In a symplectic manifold $(Q,\omega)$ we have a linear isomorphism $\flat_{\omega}:\mathfrak{X}(Q)\rightarrow \Omega^{1}(Q)$, given by $\langle\flat_{\omega}(X),Y\rangle=\omega(X,Y)$ for any vector fields $X, Y$. The inverse of $\flat_{\omega}$ will be denoted by $\sharp_{\omega}$.

In the paper we will use the notion of canonical almost tangent structure $J:TTQ \rightarrow TTQ$. This is a $(1,1)$- tensor field on $TQ$ whose expression in local coordinates is
$J=dq^{i}\otimes \frac{\partial}{\partial \dot{q}^{i}}$, where $\otimes$ stands for the tensor product. For instance, if $\Gamma$ is a SODE  $J(\Gamma) = \dot{q}^{i}\frac{\partial}{\partial \dot{q}^{i}}$. Considering the dual of the canonical almost tangent structure and a function $\phi\in C^{\infty}(TQ)$, we have that $J^{*}(d\phi) = \frac{\partial \phi}{\partial \dot{q}^{i}}dq^{i}$.

Given a Lagrangian function $L:TQ\rightarrow \mathbb{R}$, the associated energy $E_{L}$ is the function defined by $E_{L}(q,\dot{q})=\dot{q}\frac{\partial L}{\partial \dot{q}} - L(q,\dot{q})$ and we may write a symplectic form on $TQ$, denoted by $\omega_{L}$, defined by $\omega_L = -d(J^{*}(dL))$. In natural coordinates of $TQ$, $\omega_{L}=\frac{\partial^{2} L}{\partial \dot{q}^{i} \partial q^{j}} dq^{i}\wedge dq^{j} + \frac{\partial^{2} L}{\partial \dot{q}^{i} \partial \dot{q}^{j}} dq^{i}\wedge d\dot{q}^{j}$. This geometric construction is used to write Euler-Lagrange dynamics as the integral curves of the vector field $\Gamma_{L}$ solving the equation $i_{\Gamma_{L}}\omega_{L}=dE_{L}$, where $i_{\Gamma_{L}}\omega_{L}$ denotes the contraction of $\Gamma_L$ by $\omega_L$ (see \cite{Leon_Rodrigues}). In fact, this is the geometric equation defining Hamiltonian vector fields on general symplectic manifolds.

Let $\mathcal{G}$ be a Riemannian metric on $Q$, locally given by
$\mathcal{G}_{i j}$ $=\mathcal{G}\left(\frac{\partial}{\partial q^{i}},\frac{\partial}{\partial q^{j}}\right)$.

Let $\nabla$ be a linear connection. In local coordinates, connections are fully described by the Christoffel symbols which are real-valued functions on $Q$ given by $\nabla_{\frac{\partial}{\partial q^{i}}}\frac{\partial}{\partial q^{j}}=\Gamma_{i j}^{k}\frac{\partial}{\partial q^{k}}.$
Thus if $X$ and $Y$ are vector fields whose coordinate expressions are $X=X^{i}\frac{\partial}{\partial q^{i}}$ and $Y=Y^{i}\frac{\partial}{\partial q^{i}}$, then $\nabla_{X} Y=\left(X^{i}\frac{\partial Y^{k}}{\partial q^{i}}+X^{i}Y^{j} \Gamma_{i j}^{k}\right)\frac{\partial}{\partial q^{k}}.$

From now on if $(Q,\mathcal{G})$ is a Riemannian manifold, $\nabla$ will be the \textit{Levi-Civita connection} and the covariant derivative of a vector field $X\in \mathfrak{X}(Q)$ along a curve $q:I\rightarrow Q$, where $I$ is an interval of $\mathbb{R}$, is given by the local expression
\begin{equation*}
		\nabla_{\dot{q}}X (t)=\left( \dot{X}^{k}(t)+\dot{q}^{i}(t) X^{j}(t)\Gamma_{i j}^{k}(q(t)) \right)\frac{\partial}{\partial q^{k}}.
\end{equation*}

Finally, we can use the non-degeneracy property of $\mathcal{G}$ to define the musical isomorphism $\flat_{\mathcal{G}}:\mathfrak{X}(Q)\rightarrow \Omega^{1}(Q)$ defined by $\flat_{\mathcal{G}}(X)(Y)=\mathcal{G}(X,Y)$ for any $X, Y \in \mathfrak{X}(Q)$. Also, denote by $\sharp_{\mathcal{G}}:\Omega^{1}(Q)\rightarrow \mathfrak{X}(Q)$ the inverse musical isomorphism, i.e., $\sharp_{\mathcal{G}}=\flat_{\mathcal{G}}^{-1}$. In local coordinates, $\flat_{\mathcal{G}}(X^{i}\frac{\partial}{\partial q^{i}})=\mathcal{G}_{ij}X^{i}dq^{j}$ and $\sharp_{\mathcal{G}}(\alpha_{i}dq^{i})=\mathcal{G}^{ij}\alpha_{i}\frac{\partial}{\partial q^{j}}$, where $\mathcal{G}^{ij}$ is the inverse matrix of $\mathcal{G}_{ij}$. The gradient of a smooth function $f$ is defined as the vector field $\text{grad }f=\sharp_{\mathcal{G}}(df)$.

The complete lift of $\mathcal{G}$ on $Q$ is denoted by $\mathcal{G}^{c}$ and it is a semi-Riemannian metric, since it is not positive-definite. In natural bundle coordinates, its expression is $\displaystyle{\mathcal{G}^{c}=\dot{q}^{k}\frac{\partial \mathcal{G}_{ij}}{\partial q^{k}} dq^{i} \otimes dq^{j} + \mathcal{G}_{i j} dq^{i} \otimes d\dot{q}^{j} + \mathcal{G}_{i j} d\dot{q}^{i} \otimes dq^{j}}$.



{\begin{definition}
\begin{enumerate}
    \item The vertical lift of a vector field $X\in \mathfrak{X}(Q)$ to $TQ$ is a vector field on the tangent bundle $TQ$. If $(q^{i},\dot{q}^{i})$ are the natural coordinates on $TQ$ and $X=X^{i} \frac{\partial}{\partial q^{i}}$ then its local expression is $X^{V}=X^{i}\frac{\partial}{\partial \dot{q}^{i}}$.
    \item The  complete lift of a vector field, $X$, which in local coordinates is given by $X=X^i\frac{\partial}{\partial q^i}$ is $X^c=X^i\frac{\partial}{\partial q^i} + \dot{q}^j\frac{\partial X^i}{\partial q^j}\frac{\partial}{\partial \dot{q}^i}$. 
    \item The vertical lift of a one-form $\alpha\in\Omega^1(Q)$ is defined as the pullback of $\alpha$ to $TQ$, i.e. $\alpha^V=(\tau_Q)^*\alpha,$ which locally is $\alpha^V=\alpha_idq^i$ .
    \item The complete lift of a one-form $\alpha\in\Omega^1(Q)$ is 
    $\alpha^c=\dot{q}^j\frac{\partial\alpha^i}{\partial q^j}dq^i + \alpha^idq^i$.
\end{enumerate}
\end{definition}

\begin{lemma}\label{completemetricLemma}
    For a Riemannian metric $\mathcal{G}$ on $Q$, vector  fields  $X,Y\in\mathfrak{X}(Q)$ and a one-form $\alpha\in\Omega^1(Q)$ we have
    \begin{enumerate}
        \item $(\alpha(X))^V=\alpha^c(X^V),\, \mathcal{G}^c(X^V,Y^c) = \mathcal{G}^c(X^c,Y^V) = [\mathcal{G}(X,Y)]^V,\newline \mathcal{G}^c(X^V,Y^V)=0.$
        \item $\left[ \sharp_{\mathcal{G}}(\alpha)\right]^{V} = \sharp_{\mathcal{G}^{c}}(\alpha^{V})$.
    \end{enumerate}
\end{lemma}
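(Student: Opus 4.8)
The plan is to establish both items by direct computation in the natural bundle coordinates $(q^i,\dot q^i)$, relying on the explicit local expressions for the lifts and for $\mathcal{G}^c$ recorded above. The two elementary facts that drive every computation are that $dq^i$ annihilates every vertical lift (so $dq^i(X^V)=0$), while $d\dot q^i(X^V)=X^i$ and $dq^j(Y^c)=Y^j$.

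For item (1), I would first contract $\alpha^c = \dot q^j \frac{\partial \alpha_i}{\partial q^j}\, dq^i + \alpha_i\, d\dot q^i$ with $X^V = X^i \frac{\partial}{\partial \dot q^i}$; the $dq^i$ term drops out and only $\alpha_i\, d\dot q^i(X^V)=\alpha_i X^i$ survives, which is exactly the function $\alpha(X)$ pulled back to $TQ$, i.e.\ $(\alpha(X))^V$. For the metric identities I would feed $X^V$ and $Y^c$ into $\mathcal{G}^c$: of its three terms only the last, $\mathcal{G}_{ij}\, d\dot q^i \otimes dq^j$, pairs nontrivially with $X^V$ in the first slot, producing $\mathcal{G}_{ij}X^i Y^j = [\mathcal{G}(X,Y)]^V$; the companion equality $\mathcal{G}^c(X^c,Y^V)=[\mathcal{G}(X,Y)]^V$ then follows from the symmetry of $\mathcal{G}^c$ together with relabeling $X\leftrightarrow Y$. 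Finally $\mathcal{G}^c(X^V,Y^V)=0$ because every term of $\mathcal{G}^c$ carries at least one $dq$ factor, which kills one of the two vertical arguments.

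For item (2), rather than inverting $\mathcal{G}^c$ directly — a messy block computation — I would leverage item (1) together with the defining property of the musical isomorphism. Since $\mathcal{G}^c$ is non-degenerate (it is semi-Riemannian), it suffices to verify $\flat_{\mathcal{G}^c}\big( [\sharp_{\mathcal{G}}\alpha]^V \big) = \alpha^V$ as one-forms on $TQ$, and for this I would test both sides against a spanning set of $\mathfrak{X}(TQ)$, namely the complete lifts $Y^c$ and the vertical lifts $Y^V$ (which together span the tangent spaces of $TQ$ pointwise, since $(\partial/\partial q^i)^c = \partial/\partial q^i$ and $(\partial/\partial q^i)^V = \partial/\partial \dot q^i$). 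Writing $V := \sharp_{\mathcal{G}}\alpha$, item (1) yields $\mathcal{G}^c(V^V, Y^c) = \mathcal{G}(V,Y) = \alpha(Y) = \alpha^V(Y^c)$ and $\mathcal{G}^c(V^V, Y^V) = 0 = \alpha^V(Y^V)$, so the two one-forms agree on the spanning set and therefore coincide.

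The only genuinely delicate point is the reduction used in item (2): one must invoke the non-degeneracy of $\mathcal{G}^c$ so that $\sharp_{\mathcal{G}^c}$ is well defined, and the fact that complete and vertical lifts span $T(TQ)$ pointwise, so that agreement on them forces equality of the one-forms. Everything else is bookkeeping in the coordinate contractions above.
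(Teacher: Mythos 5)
Your proof of item (2) is exactly the paper's argument: test $\left[\sharp_{\mathcal{G}}(\alpha)\right]^{V}$ and $\sharp_{\mathcal{G}^{c}}(\alpha^{V})$ against the spanning set $\{Y^{c},Y^{V}\}$, invoke item (1), and conclude by non-degeneracy of $\mathcal{G}^{c}$. For item (1) the paper simply cites de Le\'on--Rodrigues while you supply the coordinate verification, and your computation is correct --- note that you rightly use the standard formula $\alpha^{c}=\dot{q}^{j}\frac{\partial \alpha_{i}}{\partial q^{j}}dq^{i}+\alpha_{i}\,d\dot{q}^{i}$, whose second term is misprinted as $\alpha_{i}\,dq^{i}$ in the paper's definition (with the printed version, $\alpha^{c}(X^{V})$ would vanish and item (1) would fail).
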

\begin{proof}
   For $1$, see \cite{Leon_Rodrigues}. For $2$, given any $Y\in\mathfrak{X}(Q)$, it is enough to prove the equality using the inner product with the lifts $Y^{c}$ and $Y^{V}$, because if $\{Y^{a}\}$ was a local basis of vector fields, then $\{(Y^{a})^{c}, (Y^{a})^{V}\}$ would also be a local basis of vector fields on $TQ$. On one hand, $\displaystyle{\mathcal{G}^{c}\left(\left[ \sharp_{\mathcal{G}}(\alpha)\right]^{V},Y^{V} \right) = 0 = \alpha^{V}(Y^{V}) = \mathcal{G}^{c}\left(\sharp_{\mathcal{G}^{c}}(\alpha^{V}),Y^{V} \right)}$. On the other hand, $\mathcal{G}^{c} \left(\left[ \sharp_{\mathcal{G}}(\alpha)\right]^{V}, Y^{c} \right) = \left[ \mathcal{G}(\sharp_{\mathcal{G}}(\alpha), Y)\right]^{V} = \left[ \alpha(Y)\right]^{V}= \alpha^{V}(Y^{c})=$\newline $\mathcal{G}^{c} \left( \sharp_{\mathcal{G}^{c}}(\alpha^{V}),Y^{c} \right)$. Hence, the results follow by non-degeneracy of $\mathcal{G}^{c}$.\hfill$\square$\end{proof}

\section{Nonlinear nonholonomic mechanics}

A nonlinear nonholonomic constraint on a mechanical system is represented by a submanifold $\mathcal{M}$ of the tangent bundle $TQ$. The constraint may be written as the set of points where a function of the type $\phi:TQ \rightarrow \mathbb{R}^{m}$ vanishes, where $m < n=\dim Q$. That is, $\mathcal{M}=\phi^{-1}(\{0\})$. If every point in $\mathcal{M}$ is regular, i.e., the tangent map $T_{p}\phi$ is surjective for every $p\in \mathcal{M}$, then $\mathcal{M}$ is a submanifold of $TQ$ with dimension $2n-m$ by the regular level set theorem. 

Let $\phi = (\phi^{1}, \dots, \phi^{m})$ denote the coordinate functions of the constraint $\phi$. The equations of motion of a nonlinear nonholonomic system are integral curves of a vector field $\Gamma_{nh}$ defined by the equations
\begin{equation}\label{noneq}
i_{\Gamma_{nh}}\omega_{L} - dE_{L} = \lambda_{a}J^{*}(d\phi^{a}),\quad\Gamma_{nh} \in TM,
\end{equation}
where $\lambda_{a}$ are Lagrange multiplier's to be determined. These equations have a well-defined solution if $\sharp_{\omega_{L}}(J^{*}(d\phi^{a}))\cap TM = \{0\}$.

The coordinate expression of the equations of motion of a system with nonlinear nonholonomic constraints are called Chetaev's equations and they are given by  (see \cite{cendra}, \cite{MdLeon} for more details)
\begin{equation}
        \frac{d}{dt}\left(\frac{\partial L}{\partial \dot{q}}\right)-\frac{\partial L}{\partial q}=\lambda_{a} \frac{\partial \phi^{a}}{\partial \dot{q}},\quad\phi^{a}(q,\dot{q}) = 0.
\end{equation}

In the following, we will consider a mapping that to each point $v_{q}$ on the submanifold $\mathcal{M}$ assigns a vector subspace of $T_{v_{q}}(TQ)$. From now on, let $S$ be a distribution on $TQ$ restricted to $\mathcal{M}$, whose annihilator is spanned by the one-forms $J^{*}(d\phi^{a})$, i.e., $S^{o}=\left\langle \{J^{*} (d\phi^{a})\}\right\rangle.$ Then, Chetaev's equations may be written in Riemannian form using a geodesic-type equation according to the following theorem

\begin{proposition}
    A curve $q:I\rightarrow Q$ is a solution of Chetaev's equations for a mechanical type Lagrangian, i.e., of the type $L(q,\dot{q})=\frac{1}{2}\mathcal{G}_{ij}\dot{q}^{i}\dot{q}^{j} - V(q)$, if and only if $\phi(q,\dot{q})=0$ and it satisfies the equation
    \begin{equation}\label{Chetaev's eqns}
        \left( \nabla_{\dot{q}}\dot{q} + \text{grad } V \right)^{V} \in S^{\bot},
    \end{equation}
    where $S^{\bot}$, the orthogonal distribution to $S$, with respect to the semi-Riemannian metric $\mathcal{G}^{c}$.
\end{proposition}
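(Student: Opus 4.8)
\emph{Proof proposal.} The plan is to reduce the intrinsic membership condition $\big(\nabla_{\dot q}\dot q + \text{grad } V\big)^{V}\in S^{\bot}$ to the coordinate form of Chetaev's equations, exploiting that for a mechanical Lagrangian the Euler--Lagrange operator is exactly the metric lowering of the geodesic-plus-gradient vector field. First I would carry out the classical computation identifying, for $L=\frac12\mathcal{G}_{ij}\dot q^{i}\dot q^{j}-V$, the Euler--Lagrange operator with $\flat_{\mathcal{G}}\big(\nabla_{\dot q}\dot q+\text{grad } V\big)$. Writing $\partial L/\partial\dot q^{k}=\mathcal{G}_{kj}\dot q^{j}$, differentiating along the curve, and symmetrizing the resulting quadratic velocity terms, one recognizes the Christoffel symbols of the Levi-Civita connection, so that
\[
\frac{d}{dt}\frac{\partial L}{\partial \dot q^{k}} - \frac{\partial L}{\partial q^{k}} = \mathcal{G}_{kl}\left(\ddot q^{l} + \Gamma^{l}_{ij}\dot q^{i}\dot q^{j}\right) + \frac{\partial V}{\partial q^{k}} = \mathcal{G}_{kl}W^{l},
\]
where $W:=\nabla_{\dot q}\dot q+\text{grad } V$ and we use $\mathcal{G}_{kl}(\text{grad } V)^{l}=\partial V/\partial q^{k}$. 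Hence Chetaev's equations say precisely that $\mathcal{G}_{kl}W^{l}=\lambda_{a}\,\partial\phi^{a}/\partial\dot q^{k}$ together with $\phi=0$, i.e.\ that the covector $\flat_{\mathcal{G}}(W)$ lies in $S^{o}=\langle\{J^{*}(d\phi^{a})\}\rangle$, since $J^{*}(d\phi^{a})=\frac{\partial\phi^{a}}{\partial\dot q^{k}}dq^{k}$.

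Next I would translate the right-hand condition. The key is the standard identity $S^{\bot}=\sharp_{\mathcal{G}^{c}}(S^{o})$, valid because $\mathcal{G}^{c}$ is nondegenerate: a vector $Z$ is $\mathcal{G}^{c}$-orthogonal to $S$ iff $\flat_{\mathcal{G}^{c}}(Z)$ annihilates $S$, i.e.\ $\flat_{\mathcal{G}^{c}}(Z)\in S^{o}$. Applied to $Z=W^{V}$, the condition $W^{V}\in S^{\bot}$ becomes $\flat_{\mathcal{G}^{c}}(W^{V})\in S^{o}$. A slick route then uses Lemma~\ref{completemetricLemma}(2): setting $\alpha=\flat_{\mathcal{G}}(W)$, so $W=\sharp_{\mathcal{G}}(\alpha)$, the lemma gives $W^{V}=\sharp_{\mathcal{G}^{c}}(\alpha^{V})$, whence $\flat_{\mathcal{G}^{c}}(W^{V})=\alpha^{V}=(\flat_{\mathcal{G}}(W))^{V}$; since the vertical lift of a one-form preserves its components, $(\flat_{\mathcal{G}}(W))^{V}\in S^{o}$ is literally $\mathcal{G}_{kl}W^{l}=\lambda_{a}\,\partial\phi^{a}/\partial\dot q^{k}$, matching the first step. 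I would prefer, for full rigor, the equivalent direct route, which avoids invoking the lemma: using the coordinate form of $\mathcal{G}^{c}$ and that $W^{V}=W^{k}\frac{\partial}{\partial\dot q^{k}}$ is purely vertical, only the term $\mathcal{G}_{ij}\,d\dot q^{i}\otimes dq^{j}$ survives in the pairing, giving $\mathcal{G}^{c}(W^{V},Y)=\mathcal{G}_{ij}W^{i}A^{j}$ for $Y=A^{i}\frac{\partial}{\partial q^{i}}+B^{i}\frac{\partial}{\partial\dot q^{i}}$, while $Y\in S$ iff $\frac{\partial\phi^{a}}{\partial\dot q^{i}}A^{i}=0$.

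Finally, I would close the equivalence with a pointwise linear-algebra step: $\mathcal{G}_{ij}W^{i}A^{j}=0$ for every $A$ with $\frac{\partial\phi^{a}}{\partial\dot q^{i}}A^{i}=0$ holds iff $\mathcal{G}_{ij}W^{i}$ lies in the span of the covectors $J^{*}(d\phi^{a})=\frac{\partial\phi^{a}}{\partial\dot q^{j}}dq^{j}$, i.e.\ $\mathcal{G}_{ij}W^{i}=\lambda_{a}\frac{\partial\phi^{a}}{\partial\dot q^{j}}$ for some multipliers $\lambda_{a}$; this is exactly the annihilator relation $S^{o}=\langle\{J^{*}(d\phi^{a})\}\rangle$ read pointwise. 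Carrying along the constraint $\phi(q,\dot q)=0$, which appears identically on both sides, yields the stated equivalence with Chetaev's equations. I expect the main difficulty to be conceptual rather than computational: the vector $W$ involves $\ddot q$ and is defined only along the lifted curve, not as a genuine element of $\mathfrak{X}(Q)$, so one must be careful that Lemma~\ref{completemetricLemma}(2) is used pointwise along $(q(t),\dot q(t))$ (which is precisely why the direct coordinate pairing is the safer argument), and one must verify that only the mixed vertical--horizontal block of $\mathcal{G}^{c}$ contributes to the pairing with $W^{V}$.
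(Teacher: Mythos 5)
Your proposal is correct and follows essentially the same route as the paper's proof: rewrite Chetaev's equations in coordinates to recognize the Euler--Lagrange operator as $\flat_{\mathcal{G}}\big(\nabla_{\dot q}\dot q+\mathrm{grad}\, V\big)$, and then use the identity $S^{\bot}=\sharp_{\mathcal{G}^{c}}(S^{o})$ together with the coordinate form of $J^{*}(d\phi^{a})$ and of $\mathcal{G}^{c}$ restricted to vertical vectors. The only (cosmetic) difference is that you work on the ``flat'' side, showing $\flat_{\mathcal{G}^{c}}(W^{V})\in S^{o}$, whereas the paper computes $\sharp_{\mathcal{G}^{c}}(J^{*}(d\phi^{a}))$ and identifies it with the right-hand side of the equations of motion; these are equivalent by nondegeneracy of $\mathcal{G}^{c}$, and your extra care about $W$ being defined only along the lifted curve is a sensible refinement rather than a departure.
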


\begin{proof}

    Chetaev's equations for mechanical type Lagrangians imply the following second-order differential equation
    $$\ddot{q}^{i} - \mathcal{G}^{ij}\left[ \frac{1}{2}\frac{\partial \mathcal{G}_{lk}}{\partial q^{j}}\dot{q}^{l}\dot{q}^{k}-\frac{\partial \mathcal{G}_{lj}}{\partial q^{k}}\dot{q}^{l}\dot{q}^{k} - \frac{\partial V}{\partial q^{j}}\right] = \lambda_{a}\mathcal{G}^{ij}\frac{\partial \phi^{a}}{\partial \dot{q}^{j}},$$
    where $\mathcal{G}^{ij}$ is the inverse matrix of $\mathcal{G}_{ij}$. The left-hand side can be recognized to be the coordinate expression of the vertical lift of the vector field $\nabla_{\dot{q}}\dot{q} + \text{grad} V$
    (see \cite{B&L} for details). We will show that the right-hand side is the coordinate expression of the vector field $\sharp_{\mathcal{G}^{c}}(J^{*}(d\phi^{a}))$. 
    
    Given a one-form $\alpha$ on $TQ$, the inverse musical isomorphism $\sharp_{\mathcal{G}^{c}}(\alpha)$ is characterized by
    $\mathcal{G}^{c}(\sharp_{\mathcal{G}^{c}}(\alpha), X) = \langle \alpha, X \rangle, \quad \text{for any } X\in \mathfrak{X}(TQ).$ Using this property, and taking  into account the coordinate expression of $J^{*}(d\phi^{a})$, we can deduce from $        \langle J^{*}(d\phi^{a}), \frac{\partial}{\partial q^{j}}\rangle = \frac{\partial \phi^{a}}{\partial \dot{q}^{j}}$ and $           \langle J^{*}(d\phi^{a}), \frac{\partial}{\partial \dot{q}^{j}}\rangle = 0$
    that $\sharp_{\mathcal{G}^{c}} (J^{*}(d\phi^{a})) = \mathcal{G}^{ij}\frac{\partial \phi^{a}}{\partial \dot{q}^{j}}\frac{\partial}{\partial \dot{q}^{j}} \in \sharp_{\mathcal{G}^{c}}(S^{o})$.  In addition, we have that $S^{\bot}$ satisfies $S^{\bot} = \sharp_{\mathcal{G}^{c}}(S^{o})$, which finishes the proof.\hfill$\square$
\end{proof}

\begin{remark}
 Notice that $S^{\bot}$ is spanned by vertical vectors in $TQ$. This observation will be relevant later in the paper.
\end{remark}

\section{Virtual nonlinear nonholonomic constraints}\label{sec:controler}
Next, we present the construction of virtual nonlinear nonholonomic  constraints. In contrast to the case of standard constraints on mechanical systems, the concept of virtual constraint is always associated with a controlled system and not just with a submanifold defined by the constraints.

Given a control force $F:TQ\times U \rightarrow T^{*}Q$ of the form $\displaystyle{F(q,\dot{q},u) = \sum_{a=1}^{m} u_{a}f^{a}(q)}$, where $f^{a}\in \Omega^{1}(Q)$ with $m<n$, $U\subset\mathbb{R}^{m}$ the set of controls and $u_a\in\mathbb{R}$ with $1\leq a\leq m$ the control inputs, consider the associated mechanical control system 
\begin{equation}\label{mechanical:control:system}
    \nabla_{\dot{q}}\dot{q} =-\text{grad } V+u_{a}Y^{a}(q),
\end{equation}
where $V$ is a potential function on $Q$, $Y^{a}=\sharp_{\mathcal{G}} (f^{a}(q))$ and $\mathcal{G}$ is a Riemannian metric. The distribution $\mathcal{F}\subseteq TQ$ generated by the vector fields  $Y^{a}=\sharp_{\mathcal{G}}(f^{a})$ is called the \textit{input distribution} associated with the mechanical control system \eqref{mechanical:control:system}.

Hence, the solutions of the previous equation are the trajectories of a vector field of the form
\begin{equation}\label{SODE}\Gamma(q, \dot{q}, u)=G(q,\dot{q})+u_{a}(Y^{a})_{(q,\dot{q})}^{V}.\end{equation}
We call each $Y^{a}=\sharp_{\mathcal{G}}(f^{a})$ a control force vector field, $G$ is the vector field determined by the unactuated forced mechanical system
$\nabla_{\dot{q}}\dot{q} =-\text{grad }V$.

Throughout the paper, we will restrict to the case where $G$ is the Euler-Lagrange dynamics determined by a mechanical type Lagrangian $L$ associated with $\mathcal{G}$ and a potential function $V$.

Now, we recall the concept of virtual nonholonomic constraint.

\begin{definition}A \textit{virtual nonholonomic constraint} associated with the mechanical control system \eqref{mechanical:control:system} is a controlled invariant submanifold $\mathcal{M}\subseteq TQ$ for that system, that is, 
there exists a control function $\hat{u}:\mathcal{M}\rightarrow \mathbb{R}^{m}$ such that the solution of the closed-loop system satisfies $\psi_{t}(\mathcal{M})\subseteq \mathcal{M}$, where $\psi_{t}:TQ\rightarrow TQ$ denotes its flow.\end{definition}

\begin{definition}
    Two subspaces $W_1$ and $W_2$ of a vector space $V$ are transversal if 
    \begin{enumerate}
        \item $V = W_1+W_2$
        \item $\dim V =\dim W_1 + \dim W_2$, i.e. the dimensions of $W_1$ and $W_2$ are complementary with respect to the ambient space dimension.
    \end{enumerate}
\end{definition}

Now, we state the theorem guaranteeing the existence and uniqueness of a control law forcing the system to comply with the virtual constraints, the proof can be found at \cite{nonlinear}.

\begin{theorem}\label{main:theorem}
If the tangent space, $T_{v_{q}}\mathcal{M}$, of the manifold $\mathcal{M}$ and the vertical lift of the control input distribution $\mathcal{F}$ are transversal and $T_{v_{q}}\mathcal{M}\cap \mathcal{F}^V=\{0\}$, then there exists a unique control function making $\mathcal{M}$ a virtual nonholonomic constraint associated with the mechanical control system \eqref{mechanical:control:system}.
\end{theorem}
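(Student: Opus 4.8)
The plan is to reduce controlled invariance to an infinitesimal tangency condition and then show that the transversality hypothesis makes the resulting linear system for the control inputs uniquely solvable. First I would record that, since $\mathcal{M}=\phi^{-1}(\{0\})$ with $0$ a regular value, we have $T_{v_q}\mathcal{M}=\ker T_{v_q}\phi=\{w\in T_{v_q}(TQ):\langle d\phi^b,w\rangle=0,\ b=1,\dots,m\}$, where $d\phi^b$ is the full differential of $\phi^b$ as a one-form on $TQ$ (not the Chetaev form $J^{*}(d\phi^b)$). A control $\hat u:\mathcal{M}\to\mathbb{R}^m$ makes $\mathcal{M}$ invariant for the closed-loop flow $\psi_t$ exactly when the closed-loop vector field $\Gamma(v_q,\hat u(v_q))=G(v_q)+\hat u_a(v_q)(Y^a)^V_{v_q}$ is tangent to $\mathcal{M}$ at every $v_q\in\mathcal{M}$, since tangency of a vector field to a submanifold is equivalent to local preservation of that submanifold by its flow. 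Hence the requirement is $\langle d\phi^b,\Gamma(v_q,\hat u(v_q))\rangle=0$ for each $b$.

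Expanding this tangency condition along the splitting of $\Gamma$ into drift and control terms gives, for each $b$, the scalar equation
\begin{equation}\label{linsys}
\hat u_a\,\langle d\phi^b,(Y^a)^V\rangle=-\langle d\phi^b,G\rangle .
\end{equation}
Writing $C^b_a(v_q):=\langle d\phi^b,(Y^a)^V\rangle$ for the $m\times m$ matrix of coefficients, \eqref{linsys} is an inhomogeneous linear system for the unknowns $\hat u_a$. A control law exists and is unique on all of $\mathcal{M}$ if and only if $C^b_a$ is invertible at every point of $\mathcal{M}$, in which case $\hat u_a=-(C^{-1})_a^b\,\langle d\phi^b,G\rangle$, and smoothness of $\hat u$ follows from Cramer's rule together with the smooth dependence of the entries on $v_q$.

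The heart of the argument is therefore to show that transversality forces $C^b_a$ to be invertible, and this is the step I expect to be the main obstacle. I would argue by contradiction: suppose $C^b_a(v_q)$ is singular for some $v_q\in\mathcal{M}$, so there exists $c=(c^1,\dots,c^m)\neq 0$ with $\sum_a C^b_a c^a=0$ for every $b$. Setting $W:=c^a(Y^a)^V_{v_q}\in\mathcal{F}^V_{v_q}$, this says $\langle d\phi^b,W\rangle=0$ for all $b$, i.e. $W\in\ker T_{v_q}\phi=T_{v_q}\mathcal{M}$. Since the $(Y^a)^V$ are linearly independent (the $f^a$, hence the $Y^a=\sharp_{\mathcal{G}}(f^a)$ and their vertical lifts, being independent, as the transversality dimension count $\dim\mathcal{F}^V=m$ already demands), $c\neq 0$ yields $W\neq 0$, so $W$ is a nonzero element of $T_{v_q}\mathcal{M}\cap\mathcal{F}^V$, contradicting the hypothesis $T_{v_q}\mathcal{M}\cap\mathcal{F}^V=\{0\}$. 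Hence $C^b_a$ is invertible on $\mathcal{M}$.

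Finally I would assemble the pieces. Invertibility of $C$ yields the unique smooth $\hat u$ solving \eqref{linsys}; by construction the closed-loop field is then tangent to $\mathcal{M}$ at each of its points, so its flow satisfies $\psi_t(\mathcal{M})\subseteq\mathcal{M}$, while conversely any invariance-inducing control must satisfy \eqref{linsys} and is thereby pinned down uniquely. I would also remark that, under the stated definition, transversality packages the dimension count $\dim T_{v_q}(TQ)=\dim T_{v_q}\mathcal{M}+\dim\mathcal{F}^V=(2n-m)+m$ together with $T_{v_q}(TQ)=T_{v_q}\mathcal{M}+\mathcal{F}^V$, so that the direct-sum splitting $T_{v_q}(TQ)=T_{v_q}\mathcal{M}\oplus\mathcal{F}^V$ is precisely the geometric structure underlying the invertibility of $C$ established above.
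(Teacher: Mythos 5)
Your proof is correct and takes essentially the approach the paper relies on: the paper itself defers the proof of Theorem~\ref{main:theorem} to \cite{nonlinear}, but the linear system you derive from the tangency condition $d\phi^b(G+\hat u_a(Y^a)^V)=0$, with coefficient matrix $C^b_a=(Y^a)^V(\phi^b)$ made invertible by the linear independence of the $(Y^a)^V$ together with $T_{v_q}\mathcal{M}\cap\mathcal{F}^V=\{0\}$, is exactly the matrix $C^{ab}=(Y^b)^V(\phi^a)$ that the paper later uses to build the projector $\mathcal{P}$ and the control law $\tau^*_a$. Your formula $\hat u_a=-(C^{-1})^b_a\,G(\phi^b)$ is the internally consistent one (the paper's final proposition writes $\tau^*_a=C_{ab}G(\phi^b)$ with a $+$ sign in $\mathcal{P}(G)=G-C_{ab}G(\phi^b)(Y^a)^V$, which is a sign slip, not a flaw in your argument).
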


From now on, we assume that $T\mathcal{M}$ and $\mathcal{F}^V$ are transversal. In the following result, we characterize the closed-loop dynamics arising from the previous theorem as trajectories of a vector field satisfying a similar equation to that of the nonholonomic equations \eqref{noneq}.

\begin{lemma}\label{lemma:similar:sharps}
    Consider the Lagrangian function $L:TQ\to\R$ of mechanical type $L=K(q,\dot{q})-V(q)$, where the kinetic energy is given by a Riemannian metric $\mathcal{G}$ and consider the symplectic form $\omega_L$ of $TQ$. For any one-form $f\in\Omega^1(Q)$ we have that $\sharp_{\omega_L}(f^V)=\sharp_{\mathcal{G}^c}(f^V)$.
\end{lemma}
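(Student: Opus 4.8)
The plan is to show that each of the two vector fields $\sharp_{\omega_L}(f^V)$ and $\sharp_{\mathcal{G}^c}(f^V)$ coincides with the vertical lift of $Y:=\sharp_{\mathcal{G}}(f)\in\mathfrak{X}(Q)$, and then to conclude by non-degeneracy. The second one is immediate: Lemma~\ref{completemetricLemma}(2) gives $\sharp_{\mathcal{G}^c}(f^V)=[\sharp_{\mathcal{G}}(f)]^V=Y^V$, which in coordinates is the vertical field $\mathcal{G}^{ij}f_i\frac{\partial}{\partial\dot q^j}$. Everything therefore reduces to proving $\sharp_{\omega_L}(f^V)=Y^V$, i.e. that the vertical field $Y^V$ satisfies $i_{Y^V}\omega_L=f^V$, with the sign fixed by the convention $\langle\flat_{\omega_L}(X),\cdot\rangle=\omega_L(X,\cdot)$.

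First I would record two structural facts. The vertical lift $f^V=(\tau_Q)^*f=f_i\,dq^i$ is semibasic, so it annihilates all vertical vectors and pairs with a test vector only through its $\frac{\partial}{\partial q^i}$-components; and $Y^V$ is itself vertical. This suggests verifying the identity $i_{Y^V}\omega_L=f^V$ against the local frame of $TTQ$ built from complete and vertical lifts $\{X^c,X^V:X\in\mathfrak{X}(Q)\}$, exactly as in the proof of Lemma~\ref{completemetricLemma}(2). Against vertical lifts both sides vanish, since $\omega_L(Y^V,X^V)=0$ and $\langle f^V,X^V\rangle=0$. Against complete lifts the target is $\omega_L(Y^V,X^c)=[\mathcal{G}(Y,X)]^V=[f(X)]^V=\langle f^V,X^c\rangle$, where the middle equality is $\mathcal{G}(\sharp_{\mathcal{G}}f,X)=f(X)$ and the outer equalities follow from the coordinate forms of the lifts.

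The real content, and the step I expect to be the main obstacle, is the contraction identity $\omega_L(Y^V,X^c)=[\mathcal{G}(Y,X)]^V$, i.e. that contracting $\omega_L$ with a vertical vector reproduces the pairing coming from $\mathcal{G}^c$. The cleanest route uses the explicit coordinate expressions from the Preliminaries: because $Y^V$ is vertical, the block $\frac{\partial^2 L}{\partial\dot q^i\partial q^j}\,dq^i\wedge dq^j$ of $\omega_L$ is annihilated, and only the mixed block $\mathcal{G}_{ij}\,dq^i\wedge d\dot q^j$ survives; this is precisely the block of $\mathcal{G}^c$ responsible for the analogous pairing, the term $\dot q^k\frac{\partial\mathcal{G}_{ij}}{\partial q^k}\,dq^i\otimes dq^j$ of $\mathcal{G}^c$ being annihilated for the same reason. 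The delicate point is the sign generated when contracting $dq^i\wedge d\dot q^j$ against the vertical $Y^V$, which must be carefully reconciled with the paper's conventions for $\omega_L=-d(J^*(dL))$ and for $\flat_{\omega_L}$; once the sign is settled, non-degeneracy of $\omega_L$ yields $\sharp_{\omega_L}(f^V)=Y^V=\sharp_{\mathcal{G}^c}(f^V)$, completing the proof.
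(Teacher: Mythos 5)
Your argument is correct in substance but takes a genuinely different route from the paper's. The paper works purely in coordinates: it writes the matrix of $\flat_{\omega_L}$ in block form $\begin{pmatrix} A & \mathcal{G}_{ij}\\ -\mathcal{G}_{ij} & 0\end{pmatrix}$, inverts it explicitly, applies the inverse to the semibasic covector $f^V=(f_j,0)$ to obtain the vertical vector $(0,\mathcal{G}^{ij}f_j)=[\sharp_{\mathcal{G}}(f)]^V$, and only then invokes Lemma~\ref{completemetricLemma}(2), exactly as you do in your last step. You instead verify $\flat_{\omega_L}(Y^V)=f^V$ against the frame $\{X^c,X^V\}$, replacing the matrix inversion by the single contraction identity $\omega_L(Y^V,X^c)=[\mathcal{G}(Y,X)]^V$ together with the vanishing on vertical lifts; this mirrors the paper's own proof of Lemma~\ref{completemetricLemma}(2), avoids writing the (partially irrelevant) block $A$ at all, and makes transparent that only the $\mathcal{G}_{ij}\,dq^i\wedge d\dot{q}^j$ part of $\omega_L$ participates. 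The sign you flag as the delicate point is a real issue, but it is the paper's, not yours: with the stated conventions $\omega_L=-d(J^*(dL))$ and $\langle\flat_{\omega_L}(X),\cdot\rangle=\omega_L(X,\cdot)$, the contraction actually gives $i_{Y^V}\omega_L=-\mathcal{G}_{ij}Y^j\,dq^i=-f^V$, hence $\omega_L(Y^V,X^c)=-[\mathcal{G}(Y,X)]^V$ and $\sharp_{\omega_L}(f^V)=-Y^V$; the paper's proof lands on $+Y^V$ only because it identifies $\flat_{\omega_L}$ with the matrix of the bilinear form without transposing. Either resolution leaves the substantive content intact — $\sharp_{\omega_L}(f^V)$ is, up to sign, the vertical lift $[\sharp_{\mathcal{G}}(f)]^V$, which is all the main theorem uses (and is the same sign that produces the $-\tau^*_a(f^a)^V$ on the right-hand side of \eqref{noneq2}). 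So your proof is complete once you commit to one convention and carry that sign through consistently.
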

\begin{proof}
    In local coordinates the flat map of  $\omega_L$ is represented by the matrix $\flat_{\omega_L}=\begin{pmatrix}
        A & \mathcal{G}_{ij}\\
        -\mathcal{G}_{ij} & 0
    \end{pmatrix},$ where $A$ is a skew symmetric matrix and $\mathcal{G}_{ij}$ is the matrix representation of the Riemannian metric. Hence, the sharp map is given by $\sharp_{\omega_L}=\begin{pmatrix}
        0&-\mathcal{G}^{ij}\\
        \mathcal{G}^{ij}&\mathcal{G}^{ij}A^{-1}\mathcal{G}^{ij}
    \end{pmatrix}$ where $\mathcal{G}^{ij}$ is the inverse of $\mathcal{G}_{ij}$ and for any one-form $f\in\Omega^1(Q)$ with $f=f^idq_i$ we have that $\sharp_{\omega_L}f^V=\begin{pmatrix}
        0\\
        \mathcal{G}^{ij}f^j
    \end{pmatrix}$. On the other hand, the sharp map of the Riemannian metric $\mathcal{G}$ is given by $\sharp_\mathcal{G}=\mathcal{G}^{ij}$ and so $[\sharp_{\mathcal{G}}(f)]^V=\sharp_{\omega_L}f^V$. Finally, from 2. of Lemma \ref{completemetricLemma} we have $\sharp_{\omega_L}(f^V)=\sharp_{\mathcal{G}^c}(f^V)$.
\end{proof}

Next, we introduce the main result of the paper.

\begin{theorem}
    A vector field $\Gamma$ of the form \eqref{SODE} corresponding to the closed-loop system of the Lagrangian control system \eqref{mechanical:control:system} makes $\mathcal{M}$ invariant if and only if it satisfies
    \begin{equation}\label{noneq2}
  i_{\Gamma}\omega_{L} - dE_{L} = -\tau^*_{a}(f^a)^V, \,\,\Gamma \in TM,
\end{equation}
or, equivalently, $i_{\Gamma}\omega_{L} - dE_{L} \in \flat_{\mathcal{G}^c}(\mathcal{F}^V)$, where $\flat_{\mathcal{G}^c}(\mathcal{F}^V)=\text{span}\{\flat_{\mathcal{G}^c}(Y^V)\}=\text{span}\{(f^a)^V\}$, and $\mathcal{F}^V$ the distribution on $TQ$ spanned by the vector fields $\{\sharp_{\mathcal{G}^{c}}(f^a)^{V}\}$, being $\tau^*_a$ the unique control law from Theorem \ref{main:theorem}.

\end{theorem}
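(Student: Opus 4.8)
The plan is to split the biconditional and reduce it to a single computation of $i_\Gamma\omega_L - dE_L$ for the closed-loop field, using the two lemmas already established. First I would recall that, by the standing assumption, $G$ is the Euler--Lagrange vector field of the mechanical Lagrangian $L=K-V$, so it satisfies $i_G\omega_L = dE_L$. Writing the closed-loop field in the form \eqref{SODE} as $\Gamma = G + u_a(Y^a)^V$ and using that contraction is $C^\infty$-linear in the vector-field slot, I get
\begin{equation*}
  i_\Gamma\omega_L - dE_L = (i_G\omega_L - dE_L) + u_a\, i_{(Y^a)^V}\omega_L = u_a\, i_{(Y^a)^V}\omega_L,
\end{equation*}
so the whole statement hinges on evaluating $i_{(Y^a)^V}\omega_L$.

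The key technical step is to show $i_{(Y^a)^V}\omega_L = (f^a)^V$. Since $Y^a=\sharp_{\mathcal{G}}(f^a)$, part $2$ of Lemma \ref{completemetricLemma} gives $(Y^a)^V = \sharp_{\mathcal{G}^c}\big((f^a)^V\big)$, and Lemma \ref{lemma:similar:sharps} identifies this with $\sharp_{\omega_L}\big((f^a)^V\big)$. Recalling that $i_X\omega_L = \flat_{\omega_L}(X)$ and that $\flat_{\omega_L}\circ\sharp_{\omega_L}=\mathrm{id}$, applying $\flat_{\omega_L}$ yields $i_{(Y^a)^V}\omega_L=(f^a)^V$. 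Substituting back, $i_\Gamma\omega_L - dE_L = u_a(f^a)^V$, which is exactly the right-hand side of \eqref{noneq2} after setting $\tau^*_a=-u_a$. The same chain shows $\flat_{\mathcal{G}^c}\big((Y^a)^V\big)=(f^a)^V$, hence $\flat_{\mathcal{G}^c}(\mathcal{F}^V)=\text{span}\{(f^a)^V\}$, giving at once the equivalence of the two formulations of the equation.

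With this identity both implications become short. For the forward direction, invariance of $\mathcal{M}$ under the closed-loop flow forces $\Gamma$ to be tangent to $\mathcal{M}$, i.e. $\Gamma\in TM$; the display above shows the symplectic equation holds with coefficients $u_a$, and by Theorem \ref{main:theorem} the control achieving invariance is unique, so these are precisely $-\tau^*_a$. For the converse, the condition $\Gamma\in TM$ in \eqref{noneq2} means $\Gamma$ is everywhere tangent to $\mathcal{M}$, so its flow satisfies $\psi_t(\mathcal{M})\subseteq\mathcal{M}$ and $\mathcal{M}$ is invariant; the symplectic part of \eqref{noneq2} merely records that $\Gamma$ has the controlled form \eqref{SODE}.

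The only place demanding real care is the chained identity $i_{(Y^a)^V}\omega_L=(f^a)^V$: it is what transports the concrete control-force one-forms $f^a$ into the intrinsic symplectic equation, and it is exactly why both Lemma \ref{completemetricLemma} and Lemma \ref{lemma:similar:sharps} are invoked. I would additionally state explicitly that tangency $\Gamma\in TM$ is equivalent to flow-invariance of $\mathcal{M}$, and keep track of the sign convention $\tau^*_a=-u_a$ so that $\tau^*_a$ is correctly identified with the unique control law furnished by Theorem \ref{main:theorem}.
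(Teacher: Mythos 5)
Your proof is correct and follows essentially the same route as the paper's: both arguments hinge on the chain $(Y^a)^V=(\sharp_{\mathcal{G}}(f^a))^V=\sharp_{\mathcal{G}^c}\big((f^a)^V\big)=\sharp_{\omega_L}\big((f^a)^V\big)$ supplied by Lemma \ref{completemetricLemma}(2) and Lemma \ref{lemma:similar:sharps}, which makes the SODE form of $\Gamma$ and the symplectic equation \eqref{noneq2} interchangeable, with Theorem \ref{main:theorem} furnishing the unique control law. If anything, you are slightly more careful than the paper in tracking the sign relating $u_a$ to $\tau^*_a$ and in stating explicitly that tangency $\Gamma\in T\mathcal{M}$ is what encodes invariance.
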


\begin{proof}

Let $G$ be the vector field defined by the free system $i_{\Gamma_{}}\omega_{L} = dE_{L}$, i.e. $\sharp_{\omega_{L}}(dE_{L})=G$ and $\Gamma$ the one defined by the equation $i_{\Gamma_{}}\omega_{L} - dE_{L} = -u_{a}(f^a)^V$. Thus, $\Gamma$ is of the form 
$\Gamma(v_q)=G(v_q)+u_a(Y^a)^V_{v_q}$, for $v_q\in TQ,$ where $(Y^{a})^{V}=(\sharp_{\mathcal{G}}(f^{a}))^{V}=\sharp_{\mathcal{G}^c}((f^a)^V)=\sharp_{\omega_{L}}((f^{a})^{V})$ and the last equality holds by Lemma \ref{lemma:similar:sharps}.
From Theorem \ref{main:theorem} there exists a unique control function $\tau^*_a$ that makes $\mathcal{M}$ a virtual nonholonomic constraint, i.e. the vector field $\Gamma\in \mathfrak{X}(\mathcal{M})$ satisfies $i_{\Gamma}\omega_{L} - dE_{L} = -\tau^*_{a}(f^a)^V,$ and 
and it is of the form 
\[\Gamma(v_q)=G(v_q)+\tau^*_a(Y^a)^V_{v_q}\in T_{v_q}\mathcal{M},\] for $v_q\in \mathcal{M}.$ Equivalently, $i_{\Gamma}\omega_{L} - dE_{L} \in \flat_{\mathcal{G}^c}(\mathcal{F}^V)$ since for all $a=1,\dots,m$ $\flat_{\mathcal{G}^c}[(Y^a)^V]=\flat_{\mathcal{G}^c}[(\sharp_\mathcal{G}f^a)^V] = \flat_{\mathcal{G}^c}[\sharp_{\mathcal{G}^c}(f^a)^V]  = (f^a)^V$ where we have used property 2. of Lemma \ref{completemetricLemma}. \hfill$\square$
\end{proof}


The next proposition shows that if the vertical lift of the input distribution is orthogonal to the distribution $S$ defined by the nonholonomic system, then the constrained dynamics is precisely the nonholonomic dynamics with respect to the original Lagrangian function.

\begin{proposition}\label{orthogonal:input:distribution}
If $\mathcal{F}^{V}=S^{\bot}$  then the trajectories of the feedback controlled mechanical system \eqref{noneq2} are the nonholonomic equations of motion \eqref{Chetaev's eqns}.
\end{proposition}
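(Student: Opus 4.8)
The plan is to show that, under the hypothesis $\mathcal{F}^{V}=S^{\bot}$, the closed-loop vector field produced by Theorem~\ref{main:theorem} coincides with the nonholonomic vector field $\Gamma_{nh}$ of \eqref{noneq}, so that their integral curves agree. I would give the argument in two complementary ways, leading with the direct Riemannian route and then sharpening it to an identity of vector fields via the symplectic formulation. For the Riemannian route, recall that substituting the unique control $u_{a}=\tau^{*}_{a}$ into the mechanical control system \eqref{mechanical:control:system} yields, along closed-loop trajectories, the equation $\nabla_{\dot q}\dot q+\text{grad }V=\tau^{*}_{a}Y^{a}$. Taking vertical lifts gives $\big(\nabla_{\dot q}\dot q+\text{grad }V\big)^{V}=\tau^{*}_{a}(Y^{a})^{V}$, and since the input distribution satisfies $\mathcal{F}^{V}=\text{span}\{(Y^{a})^{V}\}$, this vertical lift lies in $\mathcal{F}^{V}$.

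Now I would invoke the hypothesis. Because $\mathcal{F}^{V}=S^{\bot}$, the vector $\big(\nabla_{\dot q}\dot q+\text{grad }V\big)^{V}$ belongs to $S^{\bot}$; moreover $\Gamma\in T\mathcal{M}$ with $\mathcal{M}=\phi^{-1}(\{0\})$ forces $\phi(q,\dot q)=0$ along the trajectories. These are precisely the two conditions appearing in the Proposition that characterizes Chetaev's equations in Riemannian form, namely $\phi(q,\dot q)=0$ together with $\big(\nabla_{\dot q}\dot q+\text{grad }V\big)^{V}\in S^{\bot}$. Hence the closed-loop curves solve the nonholonomic equations of motion \eqref{Chetaev's eqns}.

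To obtain the stronger conclusion that the closed-loop dynamics is \emph{identical} to the nonholonomic dynamics, I would translate $\mathcal{F}^{V}=S^{\bot}$ into an equality of the spanning one-forms. Using $S^{\bot}=\sharp_{\mathcal{G}^{c}}(S^{o})$ with $S^{o}=\langle\{J^{*}(d\phi^{a})\}\rangle$, and $\mathcal{F}^{V}=\sharp_{\mathcal{G}^{c}}\big(\text{span}\{(f^{a})^{V}\}\big)$ (from Lemma~\ref{lemma:similar:sharps} and property~2 of Lemma~\ref{completemetricLemma}), the bijectivity of $\sharp_{\mathcal{G}^{c}}$ coming from the non-degeneracy of $\mathcal{G}^{c}$ gives $\text{span}\{(f^{a})^{V}\}=S^{o}=\text{span}\{J^{*}(d\phi^{a})\}$. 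Consequently the right-hand side $-\tau^{*}_{a}(f^{a})^{V}$ of \eqref{noneq2} lies in $S^{o}$ and can be rewritten as $\lambda_{a}J^{*}(d\phi^{a})$ for suitable multipliers; together with $\Gamma\in T\mathcal{M}$, the closed-loop vector field then satisfies \eqref{noneq} verbatim. The well-posedness condition $\sharp_{\omega_{L}}(J^{*}(d\phi^{a}))\cap T\mathcal{M}=\{0\}$ follows from the standing transversality hypothesis $T\mathcal{M}\cap\mathcal{F}^{V}=\{0\}$, since $\sharp_{\omega_{L}}(J^{*}(d\phi^{a}))=\sharp_{\mathcal{G}^{c}}(J^{*}(d\phi^{a}))$ spans $S^{\bot}=\mathcal{F}^{V}$. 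By uniqueness of the solution of \eqref{noneq} we conclude $\Gamma=\Gamma_{nh}$.

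The step I expect to be the main obstacle is the passage from the distributional equality $\mathcal{F}^{V}=S^{\bot}$ to the equality of the spanning one-forms $\text{span}\{(f^{a})^{V}\}=\text{span}\{J^{*}(d\phi^{a})\}$: it hinges on identifying both distributions as images of the \emph{same} sharp isomorphism $\sharp_{\mathcal{G}^{c}}$ and then applying non-degeneracy, which is exactly where Lemma~\ref{lemma:similar:sharps} and Lemma~\ref{completemetricLemma} do the essential work. Once this identification is in place, confirming that \eqref{noneq2} reduces literally to \eqref{noneq} and that the solvability condition transfers so that the uniqueness in \eqref{noneq} applies is routine.
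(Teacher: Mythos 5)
Your proof is correct and its second (symplectic) route is exactly the paper's own argument: the paper's proof is a single sentence observing that under $\mathcal{F}^{V}=S^{\bot}$ the right-hand side of \eqref{noneq2} lies in $S^{o}=\mathrm{span}\{J^{*}(d\phi^{a})\}$, so \eqref{noneq2} becomes \eqref{noneq}. Your explicit justification of the span identification via the non-degeneracy of $\mathcal{G}^{c}$, the transfer of the solvability condition, and the additional direct Riemannian verification all supply details the paper leaves implicit, and they are consistent with the lemmas it provides.
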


\begin{proof}
From the symplectic formulation of Chetaev's equations in (\ref{noneq}) and the definition of the distribution $S$, we have that if $S^\perp$ equals $\mathcal{F}^{V}$ then equation \eqref{noneq2} becomes equation \eqref{noneq}. \hfill$\square$
\end{proof}

\begin{remark}
    Notice that given a mechanical system with nonlinear constraints, there always exist a distribution $\mathcal{F}$ such that $\mathcal{F}^{V}=S^{\bot}$, since $S^{\bot}$ is spanned by vertical lifts of vector fields on $Q$.\hfill$\diamond$
\end{remark}

\begin{remark}
    When $\mathcal{M}$ is a linear distribution on $Q$, the assumption made in the previous proposition reduces to the assumption considered in \cite{virtual}, i.e., $\mathcal{F}$ is orthogonal to $\mathcal{M}$.
    \hfill$\diamond$
\end{remark}

Note that equation \eqref{noneq2} can be equivalently written in the form 
\begin{equation}\label{noneq3}
  i_{\Gamma}\omega_{L} - dE_{L} \in J^{*}\hat{\mathcal{F}}^{o}, \,\,\Gamma \in T\mathcal{M},
\end{equation}
where $\hat{\mathcal{F}}^{o}=\text{span}\{d\hat{f}^{a}\}$ and $\hat{f}^{a}$ are the fiberwise linear functions on $TQ$ defined by $\hat{f}^{a}(v_{q})=\langle f^{a}(q), v \rangle$.

Equations \eqref{noneq3} resemble the symplectic equations appearing in \cite{deLeon} in the context of constrained mechanical systems. Although it is slightly different, many of the constructions obtained by these authors follow in our case. In particular, we can characterize the closed-loop dynamics as the projection of the uncontrolled dynamics to the tangent bundle $T\mathcal{M}$.

Consider the distribution $\mathcal{S}=\sharp_{\omega_{L}}(J^{*}\hat{\mathcal{F}}^{o})$. It is not difficult to prove that $\mathcal{S}=\mathcal{F}^{V}$. In particular, the transversality assumption appearing in Theorem \ref{main:theorem} is equivalent to $\mathcal{S}\cap T\mathcal{M} = \{0\}$, which implies the Whitney sum decomposition
$TTQ|_{\mathcal{M}}=\mathcal{S}|_{\mathcal{M}}\oplus T\mathcal{M}$.

Now, choosing the vector field $\{(Y^{a})^{V}\}$ as a local basis for the distribution $\mathcal{S}$, we can define the associated projections $\mathcal{Q}:T TQ \to \mathcal{S}$ and $\mathcal{P}: T TQ \to T\mathcal{M}$ given by
$\mathcal{Q} = C_{ab}(Y^{a})^V\otimes d\phi^{b}$ and $\mathcal{P}=Id - \mathcal{Q}$, where the matrix $C_{ab}$ is the inverse matrix of
$C^{ab}=(Y^{b})^{V}(\phi^{a})=\mathcal{G}^{ij}f_{i}^{b}\frac{\partial \phi^{a}}{\partial \dot{q}^{j}}=-\sharp_{\omega_{L}}(J^{*}d\hat{f}^{b})(\phi^{a})$. 
Note that this matrix is invertible to the fact that the Riemannian metric is invertible.

Finally we can prove the following result.
\begin{proposition}
    The vector field $\Gamma$ defined by  \eqref{noneq3} satisfies
    $$\Gamma=\mathcal{P}(G)=G+C_{ab}G(\phi^{b})(Y^{a})^{V}.$$
\end{proposition}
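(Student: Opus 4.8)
The plan is to show that the closed-loop field $\Gamma$ and the free dynamics $G$ differ only by an element of the distribution $\mathcal{S}$, and then invoke that $\mathcal{P}$ is precisely the projection onto $T\mathcal{M}$ along $\mathcal{S}$ supplied by the Whitney decomposition $TTQ|_{\mathcal{M}}=\mathcal{S}|_{\mathcal{M}}\oplus T\mathcal{M}$. First I would recall that the uncontrolled dynamics is characterized by $i_{G}\omega_{L}=dE_{L}$, so $G=\sharp_{\omega_{L}}(dE_{L})$. Subtracting this from \eqref{noneq3} gives $i_{\Gamma-G}\omega_{L}=i_{\Gamma}\omega_{L}-dE_{L}\in J^{*}\hat{\mathcal{F}}^{o}$, and applying $\sharp_{\omega_{L}}$ yields $\Gamma-G\in\sharp_{\omega_{L}}(J^{*}\hat{\mathcal{F}}^{o})=\mathcal{S}$. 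This is the only structural input: the closed-loop field is a correction of the uncontrolled field lying in $\mathcal{S}=\mathcal{F}^{V}$.

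Next I would verify that $\mathcal{P}=\mathrm{Id}-\mathcal{Q}$, with $\mathcal{Q}=C_{ab}(Y^{a})^{V}\otimes d\phi^{b}$, is indeed the projection with image $T\mathcal{M}$ and kernel $\mathcal{S}$. Two checks suffice. On $T\mathcal{M}$: any $X\in T\mathcal{M}$ satisfies $d\phi^{b}(X)=0$ along $\mathcal{M}$ (since $\mathcal{M}=\phi^{-1}(0)$), hence $\mathcal{Q}(X)=0$ and $\mathcal{P}(X)=X$. On $\mathcal{S}=\text{span}\{(Y^{c})^{V}\}$: using $(Y^{c})^{V}(\phi^{b})=C^{bc}$ together with the fact that $C_{ab}$ is the inverse matrix of $C^{ab}$, one computes $\mathcal{Q}((Y^{c})^{V})=C_{ab}C^{bc}(Y^{a})^{V}=(Y^{c})^{V}$, so $\mathcal{P}((Y^{c})^{V})=0$. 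The transversality hypothesis $\mathcal{S}\cap T\mathcal{M}=\{0\}$ guarantees that these two conditions determine a well-defined projection, while the invertibility of $C^{ab}$ (a consequence of the nondegeneracy of $\mathcal{G}$) is exactly what makes $\mathcal{Q}$ well defined.

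With these facts the conclusion is immediate: since $\Gamma\in T\mathcal{M}$ we have $\mathcal{P}(\Gamma)=\Gamma$, while $\Gamma-G\in\mathcal{S}=\ker\mathcal{P}$ gives $\mathcal{P}(G)=\mathcal{P}(\Gamma)=\Gamma$. The explicit coordinate formula then follows by evaluating $\mathcal{P}=\mathrm{Id}-\mathcal{Q}$ on $G$: one obtains $\mathcal{P}(G)=G-\mathcal{Q}(G)$ with $\mathcal{Q}(G)=C_{ab}\,d\phi^{b}(G)\,(Y^{a})^{V}=C_{ab}\,G(\phi^{b})\,(Y^{a})^{V}$, reproducing the stated coordinate expression (the multipliers being exactly those read off from the tangency condition $\Gamma(\phi^{b})=0$).

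The main obstacle I expect is not the final contraction but the verification that $\mathcal{P}$ has kernel exactly $\mathcal{S}$ and image exactly $T\mathcal{M}$ — that is, that the algebraic identity $C_{ab}C^{bc}=\delta^{c}_{a}$ makes $\mathcal{Q}$ idempotent with the correct image, and that $d\phi^{b}$ genuinely annihilates $T\mathcal{M}$ when restricted to $\mathcal{M}$. Both rest on the transversality assumption and on the identification $\mathcal{S}=\mathcal{F}^{V}$ asserted just before the statement; everything else is a direct consequence of applying $\sharp_{\omega_{L}}$ to the symplectic equation \eqref{noneq3}.
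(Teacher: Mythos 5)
Your proposal is correct and follows essentially the same route as the paper: write $\Gamma=G+\lambda_a(Y^a)^V$ with the correction term in $\mathcal{S}=\mathcal{F}^V$, apply $\mathcal{P}$, and use $\mathcal{P}(\Gamma)=\Gamma$ together with $\mathcal{S}=\ker\mathcal{P}$ to get $\Gamma=\mathcal{P}(G)$ and the multiplier $C_{ab}G(\phi^b)$. You simply make explicit two steps the paper leaves implicit — that $\Gamma-G\in\mathcal{S}$ follows from subtracting the symplectic equations, and that $\mathcal{Q}$ is idempotent with image $\mathcal{S}$ and $\mathcal{P}$ restricts to the identity on $T\mathcal{M}$ — both of which are verified correctly.
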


\begin{proof}
    The vector field $\Gamma$ satisfies $\Gamma=G+\lambda_{a}(Y^{a})^{V}$. Applying the projection $\mathcal{P}$ to both sides of this equality and using $\Gamma\in T\mathcal{M}$ to impose $\mathcal{P}(\Gamma)=\Gamma$, we deduce that $\tau^*_{a}=C_{ab}G(\phi^{b})$ which proves the result.\hfill$\square$
\end{proof}

Therefore, we have shown that the closed-loop dynamics results from the projection to $T\mathcal{M}$ of the uncontrolled dynamics. 


\begin{remark}
    Note that the symplectic point of view is useful in the study of virtual nonlinear nonholonomic constraints because it provides a geometric framework that captures the intrinsic structure of the system’s dynamics. The symplectic formalism allows for a natural characterization of the constraints in terms of the symplectic structure on $TQ$ induced by the Lagrangian function and the almost-tangent structure. This perspective ensures that the closed-loop vector field satisfies a Chetaev-type geometric equation, linking the constraints directly to the system’s variational properties.
\end{remark}
    
\section{Example: The double pendulum}\label{application section}

Consider the controlled double pendulum. This resembles the well-known acrobot,  but the actuator is at the shoulder rather than at the elbow.The configuration manifold of the system is $Q=\mathbb{S}^1\times\mathbb{S}^1$ with $q=(q_1,q_2)\in Q,$ where $q_1$ represents the angle for the shoulder and $q_2$ the angle for the elbow. 

The Lagrangian $L:TQ\to\R,$ is given by $L(q,\dot{q})=\frac{1}{2}\dot{q}^TD(q)\dot{q} - V(q)$, where $D(q)=\begin{bmatrix}
    ml^2(3+2\cos q_2) & ml^2(1+\cos q_2) \\
    ml^2(1+\cos q_2) & ml^2
\end{bmatrix}$ and $V(q)=-mgl\left(2\cos{q_1}+\cos{(q_1+q_2)}\right)$
are the inertia matrix and the potential, respectively. For simplicity, we set $m=l=1$.
The constraint is given by the equation $\Phi(q,\dot{q})=q_2 - \arctan\Big[(3+2\cos q_2)\dot{q_1} + (1+\cos q_2)\dot{q_2}\Big]$
and the control force is $F(q,\dot{q},u)=udq_1$. 

The controlled Euler-Lagrange equations are 
$D(q)\ddot{q} + P(q,\dot{q})=B$, with \[P(q,\dot{q})=\begin{bmatrix}
    -2s_2\dot{q_1}\dot{q_2} - s_2(\dot{q_2})^2 + g(2s_1 + s_{12}) \\
    -s_2\dot{q_1}\dot{q_2} +gs_{12}
\end{bmatrix}, B=\begin{bmatrix}
    u \\ 0
\end{bmatrix}\]
where, as shorthand, we write $s_1=\sin q_1, s_2=\sin q_2$ and $s_{12}=\sin (q_1 + q_2).$
The constraint manifold is $\mathcal{M}=\{(q,\dot{q})\in TQ \; :\; \Phi(q,\dot{q})=0\}$ and its tangent space is given by $T_{(q,\dot{q})}\mathcal{M}=\{v\in TTQ\; :\; d\Phi(v)=0\} =\spn\{X_1, X_2, X_3\}$, with
$X_1=\frac{\partial}{\partial q_1},  \quad X_2=(1+\cos q_2)\frac{\partial}{\partial \dot{q_1}} - (3+2\cos q_2)\frac{\partial}{\partial\dot{q_2}}$, $X_3=(3+2\cos q_2)\frac{\partial}{\partial q_2}+\Big[A + \sin q_2 (2\dot{q_1}+\dot{q_2})\Big]\frac{\partial}{\partial\dot{q_1}}$, where $A=1+\Big[(3+2\cos q_2)\dot{q_1} + (1+\cos q_2)\dot{q_2}\Big]^2.$


    The input distribution $\mathcal{F}$ is generated by the vector field
    $Y=\frac{\partial}{\partial q_1} - (1+\cos q_2)\frac{\partial}{\partial q_2}$. Note here that the vertical lift of the input distribution, $\mathcal{F}^V$, which is generated by $Y^V= \frac{\partial}{\partial \dot{q_1}} - (1+\cos q_2)\frac{\partial}{\partial \dot{q_2}}$, is transversal to the tangent space of the constraint manifold, $T\mathcal{M}$, thus, by Theorem (\ref{main:theorem}) there is a unique control law making the constraint manifold a virtual nonholonomic constraint.

    The control law that makes the constraint manifold invariant is 
    \[\hat{u}=-\frac{\dot{q}_2(c^2_2-2)(t^2_2-1)+qC}{d}\]
    where $C = s_{12}c_2(2-c^2_2) + s_1c_2(2c_2-1) - 5s_1 - 2s_2c_1$, $d$ is the determinant of the inertia matrix $D(q)$ and $c_1=\cos q_1, c_2=\cos q_2$ and $t_2=\tan q_2$.

We have run a simulation of the double pendulum using a fourth-order Runge-Kutta method for $N=100$ steps using a time step of $h=0.1$ and physical constants $m=l=1$ and $g=10$. We used as initial conditions $q_{1}=0.4, q_{2}=0$ and $\dot{q}_{2}=10$. The velocity $\dot{q}_{2}$ is computed by solving the equation $\Phi(q,\dot{q})=0$, so that initial conditions are in the constraint submanifold. The plot (\ref{angles evolution}) shows the time evolution of the angles $q_{1}$ and $q_{2}$, while the plots (\ref{phase-space q1}) and (\ref{phase-space q2}) show the phase space $(q_{1},\dot{q}_{1})$ and $(q_{2},\dot{q}_{2})$, respectively. The last two figures show the energy and constraint evolution in time. {The simulation shows that the controlled motion has an equilibrium point, occurring near $q_{1}=q_{2}=0$. The system dissipates energy as a result of the control forces acting on it. The constraint is approximately preserved, though we cannot observe exact preservation since the method is not specifically designed to preserve it. It is interesting to note that as we increase the value of the initial angles, the system eventually reaches an equilibrium point but it occurs at points distant from $q_{1}=q_{2}=0$, where a constant control force must always be active.} The reader can see the code and a video of the simulation on the page \url{https://github.com/alexanahory/VNNC}.

    \begin{figure}[htb!]
        \centering
            \begin{subfigure}[b]{0.32\textwidth}
         \centering 
         \includegraphics[width=\textwidth]{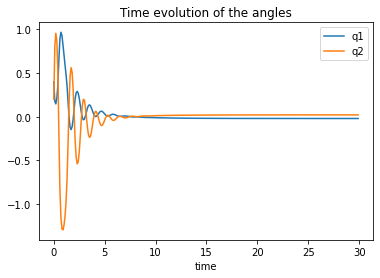}
         \caption{angles time evolution}
         \label{angles evolution}
     \end{subfigure}
     \begin{subfigure}[b]{0.32\textwidth}
         \centering
         \includegraphics[width=\textwidth]{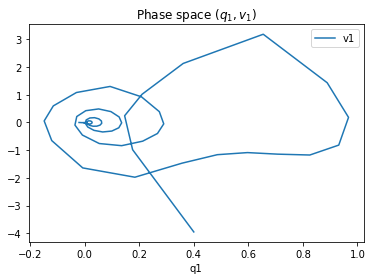}
         \caption{phase space $(q_1,\dot{q}_1)$}
         \label{phase-space q1}
     \end{subfigure}
        \label{fig:my_label}
     \begin{subfigure}[b]{0.32\textwidth}
         \centering
         \includegraphics[width=\textwidth]{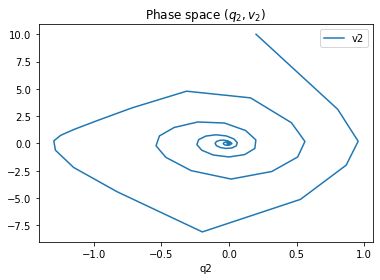}
         \caption{phase space $(q_2,\dot{q}_2)$}
         \label{phase-space q2}
     \end{subfigure}
            \begin{subfigure}[b]{0.32\textwidth}
         \centering
         \includegraphics[width=\textwidth]{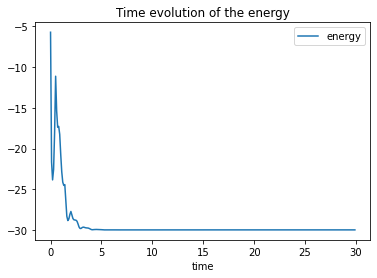}
         \caption{energy in time}
         \label{energy evolv}
     \end{subfigure}
        \label{fig:my_label}
         \centering
        \begin{subfigure}[b]{0.32\textwidth}
         \centering
         \includegraphics[width=\textwidth]{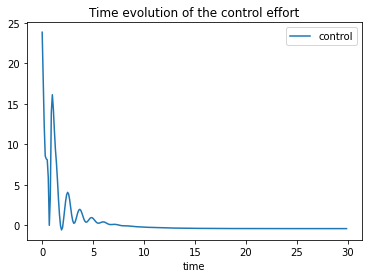}
         \caption{control in time}
         \label{control evolv}
     \end{subfigure}
     \begin{subfigure}[b]{0.32\textwidth}
         \centering
         \includegraphics[width=\textwidth]{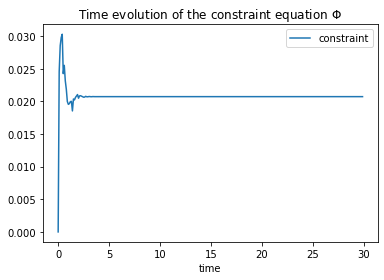}
         \caption{constraint in time}
         \label{constraint evolv}
     \end{subfigure}
        \label{fig:my_label}
    \end{figure}

\end{document}